\newcommand{\chair}{\text{chair}}
\newcommand{\kite}{\text{kite}}
\newcommand{\G}{\mathscr{G}}
\newtheorem{thm}{Theorem}
\newtheorem{lem}[thm]{Lemma}
\newtheorem{cor}[thm]{Corollary}
\theoremstyle{definition}
\newtheorem{exa}{Example}
\title{On realization graphs of degree sequences}
\author{Michael D. Barrus\\
\small Department of Mathematics\\
\small University of Rhode Island\\
\small Kingston, RI 02881\\
\small \tt barrus@uri.edu
}
\begin{document}
\maketitle
\begin{abstract}
Given the degree sequence $d$ of a graph, the realization graph of $d$ is the graph having as its vertices the labeled realizations of $d$, with two vertices adjacent if one realization may be obtained from the other via an edge-switching operation. We describe a connection between Cartesian products in realization graphs and the canonical decomposition of degree sequences described by R.I.~Tyshkevich and others. As applications, we characterize the degree sequences whose realization graphs are triangle-free graphs or hypercubes.

\bigskip
\noindent
Keywords: degree sequence; realization graph; canonical decomposition
\end{abstract}

\section{Introduction}
Given the degree sequence $d$ of a finite, simple graph, it is usually the case that $d$ has several realizations, drawn from several distinct isomorphism classes. Understanding the structure of some or all of these realizations is a major focus in the study of degree sequences. If we wish to discuss the set of realizations as a whole, however, it is often useful to study an auxiliary graph, the so-called realization graph of $d$.

To describe the realization graph, we need some definitions. In this paper all graphs are simple and have finite, nonempty vertex sets. Given the set $[n]=\{1,\dots,n\}$ and the degree sequence $d=(d_1,\dots,d_n)$ of a simple graph (where we always assume that degree sequences are in descending order), a \emph{realization of $d$} is a graph with vertex set $[n]$ such that the degree of vertex $i$ is $d_i$ for all $i \in [n]$.

Suppose that in a graph $G$ there are four distinct vertices $a,b,c,d$ such that $ab$ and $cd$ are edges of $G$ and $ad$ and $bc$ are not. The \emph{$2$-switch $\{ab,cd\} \rightrightarrows \{ad,bc\}$} is the operation of deleting edges $ab$ and $cd$ and adding edges $ad$ and $bc$ to $G$. (This operation has often also been called a \emph{transfer} or \emph{cycle exchange}.) Note that performing a 2-switch in a graph $G$ results in a graph $G'$ in which every vertex has the same degree as it had before; thus $G'$ is another realization of the degree sequence of $G$.

The \emph{(2-switch) realization graph $\G(d)$} is the graph $(\mathscr{R},\mathscr{E})$, where $\mathscr{R}$ is the set of realizations of $d$, and two vertices $G,G'$ of $\mathscr{R}$ are adjacent if and only if performing some 2-switch changes $G$ into $G'$. Since the operation of undoing a 2-switch is itself a 2-switch, $\G(d)$ may be thought of as an undirected graph. We provide examples of $\G(d)$ for a few specific degree sequences in the next section.

It is unclear from the literature where the realization graph as we have defined it first appeared, though related  notions have appeared in multiple contexts. For instance, the interchange graph of a score sequence, as introduced by Brualdi and Li~\cite{BrualdiLi84}, takes as its vertices the tournaments having a given score sequence; edges join tournaments that differ only on the orientation of a single directed triangle. The paper~\cite{BrualdiLi84} shows that this graph is a regular connected bipartite graph; other results appear in~\cite{ChenEtAl09} and~\cite{Shia87}.

Many authors have studied another interchange graph, also introduced by Brualdi~\cite{Brualdi80}, which has as its vertices the (0,1) matrices having prescribed row and column sums. Here edges join vertices that differ by a simple switch of entries; interpreting each matrix as a biadjacency matrix of a bipartite graph, these switches correspond to 2-switches that preserve the partite sets. The papers~\cite{BrualdiManber83,BrualdiShen02,Jin11,LiZhang94,Qian02,Qian99a,Qian99b,Yuster05,Zhang92} include results on such properties of this interchange graph as its diameter and lengths of its cycles. As Arikati and Peled pointed out in~\cite{ArikatiPeled99}, each of these interchange graphs arises as the realization graph $\G(d)$ for the degree sequence $d$ of a split graph obtained by adding edges to any of the associated bipartite graphs to make one of the partite sets a clique.

The realization graph $\G(d)$ seems to have attracted less attention in its general setting, where $d$ may be the degree sequence of a non-split graph. The best-known result on $\G(d)$ is that it is a connected graph for any degree sequence $d$; this is a consequence of a theorem of Fulkerson, Hoffman, and McAndrew~\cite{FHM65} (Petersen proved the same result for regular degree sequences in~\cite{Petersen1891}; see also Senior~\cite{Senior51}).

A major question of study, proposed in~\cite{Brualdi80} and as yet unresolved, is whether $\G(d)$ always has a Hamiltonian cycle (or is $K_2$). Results on interchange graphs, such as those in~\cite{LiZhang94,Zhang92}, yield partial results for the degree sequences of split graphs. The paper~\cite{ArikatiPeled99} shows that $\G(d)$ is Hamiltonian if $d$ has threshold gap 1.

In this paper we provide a structure theorem for the realization graph $\G(d)$ and comment on another class of degree sequences $d$ for which $\G(d)$ is Hamiltonian. After beginning with some important examples of realization graphs and recalling some definitions in Section 2, we show in Section 3 that a certain structural decomposition of degree sequences due to Tyshkevich~\cite{Tyshkevich80,Tyshkevich00}, called the \emph{canonical decomposition}, allows us to express the corresponding realization graphs as Cartesian products of smaller realization graphs.

This structural result then allows us in Section 4 to characterize the realization graphs that are triangle-free (and equivalently, the realization graphs that are bipartite); these are precisely the realization graphs of degree sequences of pseudo-split matrogenic graphs. We also show that the degree sequences whose realization graphs are hypercubes are precisely the degree sequences of split $P_4$-reducible graphs. (All terms will be defined later.)

Since the Hamiltonicity of a Cartesian product follows from the Hamiltonicity of its factors, the canonical decomposition of a degree sequence may be used as an aid in characterizing $d$ for which $\G(d)$ is Hamiltonian. As an illustration, we conclude Section 4 by using our results to show that all triangle-free realization graphs are Hamiltonian.

Throughout the paper we denote the vertex set of a graph $G$ by $V(G)$. For an integer $n \geq 1$, we use $K_n$, $P_n$, and $C_n$ to denote the complete graph, path, and cycle on $n$ vertices, respectively; the complete bipartite graph with partite sets of sizes $m$ and $n$ is $K_{m,n}$. For a family $\mathcal{F}$ of graphs, a graph $G$ is \emph{$\mathcal{F}$-free} if $G$ contains no element of $\mathcal{F}$ as an induced subgraph.

\section{Preliminaries}\label{sec: preliminaries}
In this section we lay some groundwork, beginning with examples of specific realization graphs that will be important in Section 4.

\begin{exa} \label{exa: 1111}
As shown in~\cite{ArikatiPeled99}, when $d$ is $(1,1,1,1)$ or $(3,2,1,1,1)$, $\G(d)$ is isomorphic to the triangle $K_3$. Figure~\ref{fig: 1111,32111} shows the realizations of both of these sequences; it is easy to verify that from any realization of either of these sequences $d$, either of the other two realizations may be obtained via a single 2-switch.

\begin{figure}
\centering
\includegraphics[width=6cm]{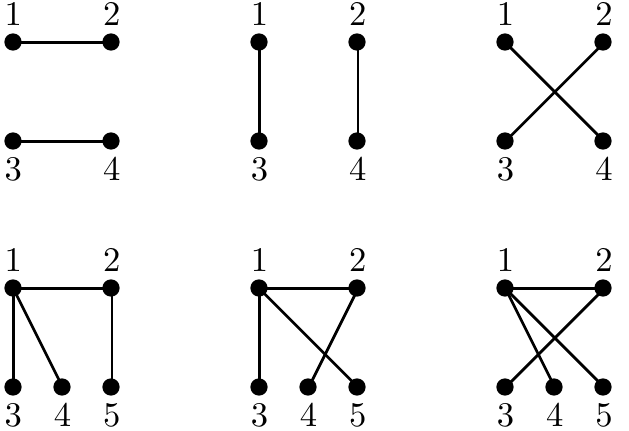}
\caption{Realizations of $(1,1,1,1)$ and $(3,2,1,1,1)$}
\label{fig: 1111,32111}
\end{figure}
\end{exa}

\begin{exa} \label{exa: 22222}
When $d=(2,2,2,2,2)$, the set $\mathscr{R}$ of realizations of $d$ contains 12 graphs, each isomorphic to $C_5$. Each realization allows exactly 5 distinct 2-switches. Performing one of these 2-switches transforms the graph into a cycle that visits the vertices in the same order as before, save that two consecutive vertices on the cycle exchange places. (See Figure~\ref{fig: 22222} for an illustration.) As observed in~\cite{FleischnerEtAl93}, the realization graph $\G(d)$ is then isomorphic to $K_{6,6}$ minus a perfect matching.

\begin{figure}
\centering
\includegraphics[width=5.32cm]{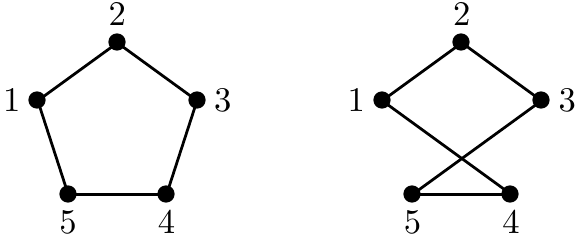}
\caption{Two realizations of $(2,2,2,2,2)$ differing by a single 2-switch}
\label{fig: 22222}
\end{figure}
\end{exa}

\begin{exa} \label{exa: R(net)}
Suppose $d$ is degree sequence $(k,\dots,k,1,\dots,1)$ consisting of $k$ copies of $k$ and $k$ ones (where $k \geq 1$). Every realization of $d$ is a net, where a \emph{($k$-)net} is a graph with vertex set $\{a_1,\dots,a_k, b_1,\dots,b_k\}$ and edge set $\{a_ib_i: i \in [k]\} \cup \{b_ib_j: i,j \in [k]\}$ for some $k\geq 1$. A graph $G$ is a \emph{net-complement} if its complement is a net.

Observe that in any $k$-net, every 2-switch involves edges $a_ib_i,a_jb_j$ and non-edges $a_ib_j,a_jb_i$ for some $i,j \in [k]$, and every pair of distinct elements $i,j$ from $[k]$ yields such a 2-switch. Performing one of these 2-switches has the same effect on a realization as exchanging the names of vertices $a_i$ and $a_j$.

It is easy to see that $\G(d)$ is isomorphic to the \emph{tranposition graph} $T_k$, where the vertices of $T_k$ are permutations of $[k]$ and the edges of $T_k$ join permutations that differ by exactly one transposition. (Thus $T_1 \cong K_1$, $T_2 \cong K_2$, and $T_3 \cong K_{3,3}$.) Transposition graphs are defined in the paper of Chase~\cite{Chase73}, where they are shown to possess Hamiltonian cycles. By an elementary result on the parity of permutations, $T_k$ is bipartite for all $k$.
\end{exa}

Having seen some examples of $\G(d)$, we now turn to the canonical decomposition of a degree sequence as defined by Tyshkevich~\cite{Tyshkevich80,Tyshkevich00}, which will be important for the structure theorem in the next section.

A \emph{split graph} is a graph $G$ for which the vertex set may be partitioned into a clique and an independent set (either of which may be empty). Observe that nets and net-complements are split graphs. If $d$ is the degree sequence of a split graph with a given partition of its vertex set, for convenience we write the degrees of vertices in the clique first, followed by a semicolon and the degrees of vertices in the independent set. Thus the degree sequence of a $k$-net may be written as $(k,\dots,k; 1,\dots,1)$. We refer to such a sequence as a ``splitted'' degree sequence, but we are not always careful to distinguish between splitted and unsplitted degree sequences.

Let $p$ and $q$ be the degree sequences of a split graph $P$ and an arbitrary graph $Q$, respectively. Let $V_1$,$V_2$ be a partition of the vertex set of $P$ into an independent set and a clique. We denote this partitioned graph by $(P,V_1,V_2)$, and we may write the degree sequence of $P$ as the splitted degree sequence $p=(p_2;p_1)$, where $p_2$ consists of the degrees of vertices in $V_2$ and $p_1$ is the list of degrees of vertices in $V_1$. We may assume that $p_2$ and $p_1$ are written with their terms in descending order; it follows that the terms in $p$ then also appear in descending order.

Let $|\ell|$ denote the length of a list $\ell$ of integers. Following Tyshkevich~\cite{Tyshkevich00}, the composition $(p_2;p_1) \circ q$ is the list obtained by concatenating $p$ and $q$, augmenting all terms in $p_2$ by $|q|$ and all terms in $q$ by $|p_2|$, and arranging the sequence into descending order. It is apparent that during this rearrangement of terms, the terms augmented from $p_2$ will appear first, in order, followed immediately by the terms augmented from $q$, in order, followed by the terms of $p_1$.

We may likewise define the composition of a splitted graph and a graph. Informally, given a realization $(P,V_1,V_2)$ of $(p_2;p_1)$ and a realization $Q$ of $q$, the composition $(P,V_1,V_2) \circ Q$ is the graph obtained by taking the disjoint union of $P$ and $Q$ and adding all edges possible between $V_2$ and $V(Q)$.

Since the realizations we deal with in this paper are distinguished not just by isomorphism type but by their edge sets, we now define this operation more precisely. The composition $(P,V_1,V_2) \circ Q$ is the graph with vertex set $[|p|+|q|]$ formed in the following way: We begin by placing an isomorphic copy of $P$ on the vertex set $\{1,\dots,|p_2|\} \cup \{|p_2|+|q|+1,\dots,|p_2|+|q|+|p_1|\}$ where vertex $i$ of $P$ corresponds to vertex $i$, if $1 \leq i \leq |p_2|$, or to vertex $i+|q|$, if $|p_2|+1 \leq i \leq |p_2|+|p_1|$, in $(P,V_1,V_2) \circ Q$. We similarly place an isomorphic copy of $Q$ on the vertex set $\{|p_2|+1,\dots,|p_2|+q\}$ such that vertex $i$ of $Q$ corresponds to vertex $|p_2|+i$ in $(P,V_1,V_2) \circ Q$. We finish the construction of $(P,V_1,V_2) \circ Q$ by adding all edges of the form $uv$, where $u \in \{1,\dots,|p_2|\}$ and $v \in \{|p_2|+1,\dots,|p_2|+|q|\}$. It is easy to see that the sequence $(p_2;p_1) \circ q$ is the degree sequence of $(P,V_1,V_2) \circ Q$.

As an example, the composition $(2,2;1,1)\circ (1,1,1,1)$ is the sequence $(6,6,3,3,3,3,1,1)$. Figure~\ref{fig: composition} depicts the composition $(P,V_1,V_2) \circ Q$, where $P$ is the path on four vertices shown on the left, with $V_1=\{3,4\}$ and $V_2=\{1,2\}$, and $Q$ is the graph isomorphic to $2K_2$.

\begin{figure}
\centering
\includegraphics[width=8cm]{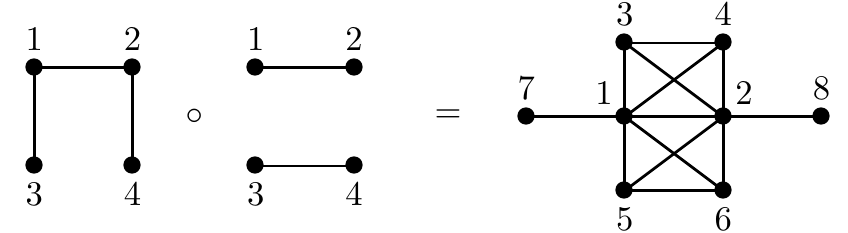}
\caption{The composition $(P,V_1,V_2) \circ Q$}
\label{fig: composition}
\end{figure}

In~\cite{Tyshkevich80,Tyshkevich00} Tyshkevich studied decompositions of degree sequences and graphs with respect to the composition $\circ$ and proved Theorems~\ref{thm: Tysh graphs} and~\ref{thm: Tysh deg seqs} below. Here, a graph or degree sequence is \emph{indecomposable} if it cannot be expressed as the composition of two graphs or degree sequences with strictly smaller sizes. We omit grouping parentheses in the longer compositions (in Tyshkevich's papers analogous compositions of two splitted graphs and of their degree sequences are defined, and with these technical details one verifies that $\circ$ is an associative operation).

\begin{thm}[\cite{Tyshkevich00}] \label{thm: Tysh graphs}
Every graph $F$ can be represented as a composition \begin{equation}\label{eq: canon decomp graphs}
F = (G_1, A_1, B_1) \circ \cdots \circ (G_k, A_k, B_k) \circ F_0
\end{equation} of indecomposable components. Here $(G_i, A_i, B_i)$ are indecomposable splitted graphs and $F_0$ is an indecomposable graph. We call the expression \eqref{eq: canon decomp graphs} the \emph{canonical decomposition} of $F$.

Moreover, graphs $F$ and $F'$ with the canonical decompositions~\eqref{eq: canon decomp graphs} and
\[F' = (G'_1, A'_1, B'_1) \circ \cdots \circ (G'_\ell, A'_\ell, B'_\ell) \circ F'_0\]
are isomorphic if and only if the following conditions hold: \textup{(a)} $F_0 \cong F'_0$; \textup{(b)} $k = \ell$; and \textup{(c)} for all $i \in \{1,\dots,k\}$, there is an isomorphism from $V(G_i)$ to $V(G'_i)$ that sends $A_i$ to $A'_i$ and $B_i$ to $B'_i$.
\end{thm}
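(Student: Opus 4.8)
The plan is to prove existence by a peeling induction on the number of vertices, and to prove the isomorphism criterion by showing that each ``cut'' appearing in a decomposition is forced by the adjacency structure of $F$ itself. The easy half of the criterion, that matching decompositions produce isomorphic graphs, follows directly from the fact that $\circ$ respects isomorphisms of its arguments: if $F_0 \cong F_0'$ and for each $i$ there is an isomorphism $V(G_i) \to V(G_i')$ carrying $A_i$ to $A_i'$ and $B_i$ to $B_i'$, then assembling these isomorphisms and using associativity of $\circ$ gives $F \cong F'$. All of the real content lies in the converse, namely the canonicity of the decomposition, so I concentrate the effort there.

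For existence I would argue by strong induction on $|V(F)|$. If $F$ is indecomposable as an ordinary graph, the decomposition is simply $F = F_0$ with $k=0$. Otherwise $F = P \circ Q$ for some splitted graph $P$ and ordinary graph $Q$ with $|V(Q)| < |V(F)|$; applying the inductive hypothesis to $Q$ expresses it as a chain of indecomposable splitted factors followed by an indecomposable ordinary graph. Using associativity I prepend $P$, and then refine $P$ itself: whenever $P$ is a composition of two smaller splitted graphs I split it, repeating until every splitted factor that occurs is indecomposable. Each such step strictly decreases the size of the piece being factored, so the process terminates and yields a decomposition of the required form.

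For canonicity I would first recast a single factorization in terms of modules. Call a nonempty proper subset $S \subseteq V(F)$ a \emph{composition module} if every vertex of $V(F) \setminus S$ is adjacent either to all of $S$ or to none of $S$, the vertices adjacent to all of $S$ induce a clique, and the vertices adjacent to none of $S$ induce an independent set. Then $F$ admits a nontrivial factorization $F = (G,A,B) \circ Q$ precisely when a composition module $S = V(Q)$ exists, with $B$ and $A$ the two external classes. In these terms the tails $V(F_0) \subset V(F_0)\cup A_k \cup B_k \subset \cdots \subset V(F)$ of a decomposition form a chain of composition modules, and the factors are recovered as the ``differences'' between consecutive members of the chain. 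I would then prove a \emph{cut lemma}: the leftmost indecomposable splitted factor $(G_1,A_1,B_1)$ is determined by $F$ up to an isomorphism preserving the clique/independent-set partition, together with a cancellation statement to the effect that $P \circ Q \cong P \circ Q'$ forces $Q \cong Q'$. Granting these, one strips off $(G_1,A_1,B_1)$, induces on the smaller graph, matches the remaining splitted factors one at a time, and finally identifies the indecomposable tails $F_0 \cong F_0'$ and the common length $k = \ell$, which is exactly conditions (a), (b), and (c).

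The main obstacle I anticipate is the cut lemma, i.e. showing that the cut is genuinely canonical and cannot shift. Two delicate points stand out. First, within a single factor one must distinguish, purely from adjacencies in $F$, the clique part $B_i$ from the independent part $A_i$; membership is governed by a vertex's uniform relation to everything further right rather than by its raw degree, and two vertices of $B_i$ need not even have nested neighborhoods. Second, one must rule out a shift in which vertices that one decomposition places in $V(F_0)$ another absorbs into a trailing splitted factor. I expect both to reduce to the indecomposability of the pieces: the key technical step is to show that any two composition modules arising as tails are nested, so that the finest chain is unique, and that a would-be shift would exhibit one factor as a nontrivial composition, contradicting its indecomposability. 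The crux of this step is a case analysis of how two overlapping composition modules interact, using the clique/independent split of their external vertices to force comparability; once that nesting is established, the inductive stripping argument delivers the stated equivalence of the two decompositions.
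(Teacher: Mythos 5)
First, a point of order: the paper contains no proof of Theorem~\ref{thm: Tysh graphs}. It is imported verbatim from Tyshkevich~\cite{Tyshkevich00}, so there is no in-paper argument to compare yours against, and your proposal has to stand on its own. Its overall architecture is the standard one for this result: existence by a peeling induction (that half is essentially fine, modulo the facts about associativity and splitted composition that the paper also defers to~\cite{Tyshkevich00}), and uniqueness via a ``cut lemma'' for the leftmost indecomposable splitted factor together with a cancellation statement, followed by inductive stripping.

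The genuine gap is in the step you yourself call the crux. The claim that any two composition modules arising as tails are nested, ``so that the finest chain is unique,'' is false, and no case analysis can force comparability, because the chain of cuts is genuinely not unique as a family of vertex subsets. Already $F=K_2$ on vertex set $\{1,2\}$ is a counterexample: composing the one-vertex graph on $\{1\}$, splitted as a clique vertex, with the one-vertex graph on $\{2\}$ gives $F$, and so does composing the one-vertex graph on $\{2\}$, splitted as a clique vertex, with the one-vertex graph on $\{1\}$. Both are decompositions of the same labeled graph into indecomposable components, and their tails $\{2\}$ and $\{1\}$ are disjoint, not nested. (The path $1\mbox{--}2\mbox{--}3$ gives a similar example with tails $\{3\}$ and $\{1\}$; threshold graphs supply arbitrarily large ones.) This is exactly why the theorem asserts uniqueness only up to the isomorphisms in conditions (a)--(c) rather than uniqueness of the chain: the decomposition is canonical in a Jordan--H\"{o}lder sense, not literally. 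Consequently your cut lemma cannot be proved by showing the cut ``cannot shift''---it can shift---and must instead be formulated and proved at the isomorphism level: if $F=(G,A,B)\circ Q=(G',A',B')\circ Q'$ with both left factors indecomposable splitted graphs, then $(G,A,B)\cong(G',A',B')$ as splitted graphs and $Q\cong Q'$, even when $V(G)$ and $V(G')$ are distinct, disjoint, or crossing. Handling the disjoint and crossing configurations (where indecomposability is used to produce an automorphism of $F$ interchanging the two cuts' symmetric-difference pieces) is the real content of the uniqueness proof, and it is precisely the part your outline replaces with a claim that is not true; until that lemma is established in this corrected form, the ``moreover'' half of the theorem remains unproved.
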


\begin{thm}[\cite{Tyshkevich00}]\label{thm: Tysh deg seqs}
Each degree sequence $d$ can be uniquely represented as a composition \begin{equation}\label{eq: canon decomp deg seqs}
d = \alpha_1 \circ \cdots \circ \alpha_k \circ d_0
\end{equation} of indecomposable components. Here the $\alpha_i$ are indecomposable splitted sequences $\alpha_i = (\beta_i; \gamma_i)$, and $d_0$ is an indecomposable graphical sequence. We call the expression \eqref{eq: canon decomp deg seqs} the \emph{canonical decomposition} of $d$.

Moreover, an arbitrary realization $F$ of the sequence $d$ can be represented in the  form~\eqref{eq: canon decomp graphs} above, where $(G_i, A_i, B_i)$ and $F_0$ are realizations of $\alpha_i$ and $d_0$, respectively. All combinations of such realizations yield all realizations of $d$.
\end{thm}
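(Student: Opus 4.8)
The plan is to deduce the theorem from the graph version, Theorem~\ref{thm: Tysh graphs}, by treating the degree sequence $d$ as the common ``shadow'' of all its realizations. The only fact I need about the two operations written $\circ$ is that they are compatible, as already noted above: if $(P,V_1,V_2)$ realizes the splitted sequence $\pi$ and $Q$ realizes $q$, then $(P,V_1,V_2)\circ Q$ realizes $\pi\circ q$. Two immediate consequences follow. First, taking degrees shows that a decomposable graph has a decomposable degree sequence. Second, composing realizations of the factors shows that a decomposable sequence has at least one decomposable realization. Combining these, a sequence $\pi$ is indecomposable if and only if it has \emph{no} decomposable realization, that is, if and only if \emph{every} realization of $\pi$ is an indecomposable graph. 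By Theorem~\ref{thm: Tysh graphs} each realization $F$ of $d$ has a unique canonical graph decomposition, and hence a well-defined \emph{pattern}: the list $(\pi_1,\dots,\pi_m;e_0)$ of degree sequences of its indecomposable factors.

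Granting for the moment the central claim that this pattern is the same for every realization of $d$ --- that the pattern is an invariant of $d$ itself --- the theorem falls out quickly. Writing $(\alpha_1,\dots,\alpha_k;d_0)$ for this common pattern, each $\alpha_i$ and $d_0$ is indecomposable as a sequence: if some $\alpha_i$ factored nontrivially, it would have a decomposable realization by the second consequence above, and substituting that realization for the $i$-th factor would produce a realization of $d$ with a strictly longer pattern, contradicting invariance. This yields existence of the decomposition~\eqref{eq: canon decomp deg seqs}; uniqueness follows because any decomposition of $d$ into indecomposable sequences is realized by a graph whose pattern is that very decomposition, which by invariance must be the common pattern. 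For the statement about realizations, compatibility of $\circ$ shows that composing any chosen realizations of $\alpha_1,\dots,\alpha_k,d_0$ gives a realization of $d$; conversely, any realization $F$ has pattern $(\alpha_1,\dots,\alpha_k;d_0)$, so by Theorem~\ref{thm: Tysh graphs} its indecomposable factors are realizations of $\alpha_1,\dots,\alpha_k$ and $d_0$ respectively. Thus all realizations arise, and only in this way.

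It remains to prove invariance of the pattern, which is the crux and does \emph{not} follow from the easy observations above: those produce \emph{some} decomposable realization, whereas invariance asserts that \emph{all} realizations decompose alike (equivalently, that an indecomposable graph always has an indecomposable degree sequence). The approach I would take is to locate the cut points of the decomposition intrinsically in the sorted sequence $d$ using the Erd\H{o}s--Gallai inequalities. The composition $\pi\circ q$ inflates the clique-part degrees of $\pi$ by $|q|$ and the $q$-degrees by the size of the clique part, so the clique part lands at the top of the sorted order and the independent part at the bottom; one shows that such a split occurs exactly at an index where the relevant Erd\H{o}s--Gallai inequality for $d$ holds with equality. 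The key lemma is the classical fact that at an index of equality the extremal adjacencies are \emph{forced}: in every realization the corresponding top vertices form a clique dominating the appropriate lower vertices, which is precisely the composition structure $F=(P,V_1,V_2)\circ Q$. Reading the tight indices off from $d$ and peeling off one indecomposable factor at a time then produces the same pattern for every realization.

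The main obstacle is exactly this forced-structure lemma and its bookkeeping. The difficulty is that a single factor $\alpha_i=(\beta_i;\gamma_i)$ contributes simultaneously to the high-degree end (through $\beta_i$) and the low-degree end (through $\gamma_i$) of the sorted sequence, so the cut points are not simple prefixes but a nested family of intervals; one must track both ends together, verify that between consecutive tight indices the sequence is genuinely indecomposable, and confirm that the forced adjacencies really separate the graph as a composition rather than merely constraining a few edges. An alternative route to invariance, avoiding the Erd\H{o}s--Gallai bookkeeping, is to use the known connectivity of $\G(d)$ under $2$-switches and to check that a single $2$-switch cannot change the pattern; there the labor shifts to showing that no $2$-switch can create or destroy the dominating-clique structure at a cut point.
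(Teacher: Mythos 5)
Your reduction of Theorem~\ref{thm: Tysh deg seqs} to the ``invariance'' claim is sound as bookkeeping: the equivalence ``$\pi$ is indecomposable as a sequence if and only if every realization of $\pi$ is an indecomposable graph,'' and the derivation of existence, uniqueness, and the ``moreover'' statement from invariance together with Theorem~\ref{thm: Tysh graphs}, are all correct. Note, however, that the paper contains no proof to compare against --- this theorem is imported verbatim from Tyshkevich~\cite{Tyshkevich00} --- so your proposal must stand on its own, and it does not: the invariance claim, which you yourself identify as the crux, is never proved. What you offer is a plan (locate cut points via Erd\H{o}s--Gallai equalities, establish a forced-structure lemma, peel off factors one at a time) together with an honest inventory of the obstacles to carrying it out. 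That is a research outline, not a proof; essentially the entire mathematical content of Tyshkevich's theorem lives in exactly the step you defer.

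There is also a smaller but genuine error in the framing: the parenthetical ``equivalently, that an indecomposable graph always has an indecomposable degree sequence'' overstates matters. Invariance does imply that statement, but the converse fails without an additional ingredient, namely unique factorization at the level of sequences: if $d=\pi\circ q=\pi'\circ q'$ with $\pi,\pi'$ indecomposable splitted sequences, one must show $\pi=\pi'$ and $q=q'$. Knowing only that indecomposable graphs have indecomposable sequences, two different realizations of $d$ would each have well-defined patterns by Theorem~\ref{thm: Tysh graphs}, and each pattern would be a factorization of $d$ into indecomposable sequences, but nothing yet forces the two patterns to agree --- Theorem~\ref{thm: Tysh graphs} compares components within a single graph's decomposition (up to isomorphism), not components across distinct realizations of the same sequence. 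Establishing that agreement requires precisely the forced-structure analysis you are hoping to sidestep. Your Erd\H{o}s--Gallai sketch does point in the right direction: the split cut points of $d$ are characterized by equality in Erd\H{o}s--Gallai-type inequalities, and such equality forces, in every realization, the clique-dominating structure that witnesses a composition. But until that lemma, with its two-ended (nested-interval) bookkeeping, is actually proved, the theorem remains unestablished.
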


\section{A structure theorem for realization graphs}

In this section we show how the composition operation $\circ$ introduced in the previous section is directly related to Cartesian products in realization graphs.

We recall that the Cartesian product of graphs $G_1=(V_1,E_1)$ and $G_2=(V_2,E_2)$ is the graph $G_1 \Box G_2$ having as its vertex set the Cartesian product $V_1 \times V_2$, where vertices $(u,v)$ and $(w,x)$ are adjacent if and only if either $u=w$ and $vx$ is an edge of $G_2$ or $v=x$ and $uw$ is an edge of $G_1$.

\begin{lem}[{\cite[Lemma 1]{Tyshkevich00}}]\label{lem: unique split ptn}
If $S$ is an indecomposable split graph with more than one vertex, then the partition of $V(S)$ into a clique and an independent set is unique. Moreover, if $S$ has degree sequence $(d_1,\dots,d_n)$ in descending order and $q = \max\{i:d_i \geq i-1\}$, then the $q$ vertices of highest degree comprise the clique in $S$ and the remaining $n-q$ vertices comprise an independent set in the unique splitting partition.
\end{lem}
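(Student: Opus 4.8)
The plan is to prove uniqueness by contraposition---showing that any split graph admitting two distinct split partitions must be decomposable---and then to identify the clique with the $q$ highest-degree vertices via a degree count that is forced by the absence of ``movable'' vertices.

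First I would record the elementary but crucial fact that two split partitions of the same graph differ only slightly. If $(C,I)$ and $(C',I')$ are split partitions, then $C\setminus C' = C\cap I'$ is simultaneously contained in a clique and in an independent set, hence has at most one vertex; symmetrically $C'\setminus C = I\cap C'$ has at most one vertex. Thus $C$ and $C'$ differ by removing at most one vertex $v$ from the clique and adding at most one vertex $u$, leaving exactly three ways to be distinct: a single vertex $v$ leaves the clique (so $N(v)=C\setminus\{v\}$), a single vertex $u$ joins it (so $N(u)=C$), or $v$ and $u$ are swapped.

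Next I would show that in every one of these cases one of the two ``pure moves'' is genuinely available, exposing a vertex $w$ that in one of the partitions lies in the independent set and is adjacent to precisely that partition's clique $K$. The swap is the only case needing care: if the swapped pair $u,v$ is nonadjacent then $N(v)=C\setminus\{v\}$, so $v$ alone can leave the clique; if $u,v$ are adjacent then $u$ is adjacent to all of $C$, so $u$ alone can join it. Given such a $w$, I set $W=\{w\}$, $V_2=K$, and $V_1=V(S)\setminus K\setminus\{w\}$, realizing $S$ as the Tyshkevich composition $\bigl(S[K\cup V_1],\,V_1,\,K\bigr)\circ S[\{w\}]$: indeed $w$ is joined to all of $K$ and to none of the independent set $V_1$, and both factors are nonempty since $|V(S)|\ge 2$. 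Hence $S$ is decomposable, and the contrapositive gives the uniqueness claim. I expect this to be the main obstacle---recognizing that a swap always forces a pure move, and translating a movable vertex into a legitimate composition matching the definition in the excerpt; the degenerate cases $K=\varnothing$ and $I=\varnothing$ (that is, $S$ edgeless or complete) should be checked here, but they are themselves decomposable once $|V(S)|\ge 2$.

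Finally, for the explicit description I would use uniqueness to conclude that the unique partition $(C,I)$ admits no movable vertex, and read off degrees with $c=|C|$. No clique vertex satisfies $N(v)=C\setminus\{v\}$, so each has a neighbor in $I$ and thus $\deg v\ge c$; no independent vertex satisfies $N(u)=C$, so each misses some clique vertex and thus $\deg u\le c-1$. Every clique degree therefore strictly exceeds every independent degree, so in descending order the clique occupies exactly positions $1,\dots,c$. Since $\{i:d_i\ge i-1\}$ is an initial segment (monotonicity of the $d_i$), the inequalities $d_c\ge c\ge c-1$ and $d_{c+1}\le c-1<c$ force $c=q$, identifying $C$ as the $q$ vertices of highest degree, as claimed.
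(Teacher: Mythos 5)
Your proof is correct, but there is nothing in the paper to compare it against: the paper does not prove this lemma at all, it imports it wholesale as Lemma~1 of Tyshkevich~\cite{Tyshkevich00}. Judged as a free-standing argument, yours holds up. The three pillars all check: (i) the observation that two split partitions $(C,I)$ and $(C',I')$ satisfy $|C\setminus C'|\le 1$ and $|C'\setminus C|\le 1$, with the swap case correctly reduced to a pure move (a nonadjacent swapped pair forces $N(v)=C\setminus\{v\}$, an adjacent one forces $N(u)=C$); (ii) the translation of a vertex $w$ adjacent to exactly the clique side $K$ of some split partition into the composition $(S-w,\,V_1,\,K)\circ S[\{w\}]$, which witnesses decomposability because both factors have fewer than $|V(S)|$ vertices once $|V(S)|\ge 2$ (and the construction survives the degenerate cases $K=\varnothing$ or $V_1=\varnothing$, since the paper allows empty parts in a splitted graph); and (iii) the degree count, where the absence of movable vertices gives $\deg v\ge |C|$ on the clique and $\deg u\le |C|-1$ on the independent set, after which the initial-segment property of $\{i:d_i\ge i-1\}$ pins down $q=|C|$. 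One bonus of your route is worth making explicit: the strict degree separation in (iii) is precisely what makes the phrase ``the $q$ vertices of highest degree'' unambiguous; for decomposable split graphs such as $P_3$, ties between the two sides occur and that phrase requires interpretation, so your argument justifies the lemma's statement literally. What the citation buys the paper is brevity; what your argument buys is a self-contained treatment using only the definitions of split partition and of the composition $\circ$ already given in Section~2.
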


\begin{lem}[{\cite[Prop.~3.11]{BarrusWest12}}]\label{lem: A4 within component}
Given any graph $G$ and any 2-switch possible in $G$, the four vertices involved in the 2-switch all belong to the same component of the canonical decomposition of $G$.
\end{lem}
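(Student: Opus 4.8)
The plan is to prove Lemma~\ref{lem: A4 within component} by analyzing how the edges and non-edges of a graph $G$ distribute across the components of its canonical decomposition, and then observing that a 2-switch requires a configuration---two edges $ab, cd$ together with two non-edges $ad, bc$ on four distinct vertices---that cannot straddle distinct components.

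First I would recall the structure imposed by the composition $\circ$. Writing the canonical decomposition as $G = (G_1,A_1,B_1) \circ \cdots \circ (G_k,A_k,B_k) \circ F_0$, I would fix the vertex partition $V(G) = W_1 \cup \cdots \cup W_k \cup W_0$ into the vertex sets of the successive components. The key structural fact, read off directly from the definition of the composition, is an \emph{adjacency dichotomy} between components: if $u$ and $v$ lie in different components, then whether $uv$ is an edge of $G$ depends only on which components they lie in and on the clique/independent-set roles of those vertices, not on the identities of $u$ and $v$ within their components. Concretely, for $i < j$, every vertex of the clique $A_i$ is adjacent in $G$ to \emph{every} vertex of $W_j$, while every vertex of the independent set $B_i$ is adjacent to \emph{no} vertex of $W_j$. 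So once one knows the component and the clique-or-independent role of each endpoint, the adjacency across components is completely determined. I would state and justify this dichotomy carefully, since it is the engine of the whole argument.

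Next I would argue by contradiction: suppose a legal 2-switch $\{ab, cd\} \rightrightarrows \{ad, bc\}$ uses vertices spread over more than one component. The four vertices $a,b,c,d$ are distinct, $ab$ and $cd$ are edges, and $ad$ and $bc$ are non-edges. The main case analysis concerns how $a,b,c,d$ partition among components. Using the dichotomy, I would show that any placement of the four vertices into two or more components forces a contradiction between the required edge/non-edge pattern and the ``all-or-nothing'' cross-component adjacency. For instance, if $a$ and $d$ lie in different components, then the status of the pair $ad$ is forced by their roles; but $ad$ is required to be a non-edge while (depending on the configuration) one of $ab, cd$ sharing a component-endpoint role with $a$ or $d$ is required to be an edge, and tracing the forced adjacencies produces an inconsistency. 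The bookkeeping reduces to checking the possible distributions of the two edges and two non-edges across the components, and in each such distribution the cross-component adjacencies are too rigid to realize both an edge and a non-edge with endpoints playing the same roles.

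The main obstacle will be organizing the case analysis cleanly rather than any single hard deduction. Because the two edges and two non-edges overlap in their endpoints (the four vertices form a $4$-cycle $a b c d$ in terms of the switch pattern, with $ab, cd$ present and $ad, bc$ absent), I cannot treat the pairs independently; I must track which \emph{pairs} of the four vertices fall in the same component and which roles (clique or independent set) the separated vertices play. The cleanest route is probably to show first that all four vertices must lie in a single $W_i$: if even one of them, say $a$, lies in a component different from another, say $b$, then since $ab$ is an edge the dichotomy pins down the adjacency of $a$ (or its whole clique/independent role) to the other component, and this forces the status of at least one of the required non-edges $ad, bc$ to contradict the pattern. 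Once all four vertices are confined to a common component $W_i$, they lie in the same component of the canonical decomposition, which is exactly the claim. I would take care that the argument covers the split components $(G_i,A_i,B_i)$ and the final indecomposable graph $F_0$ uniformly, using Lemma~\ref{lem: unique split ptn} where needed to justify that the clique/independent-set roles are well defined in each split component.
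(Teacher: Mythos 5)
The paper offers no proof of this lemma to compare against: it is stated with a citation, being Proposition~3.11 of \cite{BarrusWest12}, and is simply imported. So your attempt can only be judged on its own terms, as a reconstruction from the definition of the composition $\circ$. On those terms your plan is essentially right, and it does complete. The adjacency dichotomy you isolate is exactly the engine needed: in $(G_1,A_1,B_1)\circ\cdots\circ(G_k,A_k,B_k)\circ F_0$, a vertex in the clique part of an outer split component is adjacent to every vertex of every deeper component, while a vertex in the independent part is adjacent to none of them. A clean way to organize the case analysis: let $S$ be the subset of $\{a,b,c,d\}$ lying in the outermost component meeting $\{a,b,c,d\}$; if the four vertices are not all in one component, then $1 \le |S| \le 3$ and that outermost component is one of the split components. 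The symmetries $(a\,b)(c\,d)$, $(a\,c)(b\,d)$, $(a\,d)(b\,c)$ of the switch pattern reduce everything to five cases: $S=\{a\}$, $S=\{a,b\}$, $S=\{a,d\}$, $S=\{a,c\}$, and $S=\{a,b,c\}$.

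The one substantive soft spot is that your sketch attributes every contradiction to the ``all-or-nothing'' cross-component adjacency; in particular, your proposed shortcut --- one cross-component edge and one cross-component non-edge meeting at the same vertex --- only disposes of the cases $S=\{a\}$ and $S=\{a,c\}$. In the remaining cases the cross-component constraints are mutually consistent, and you must also use the fact that \emph{inside} a split component the two parts really are a clique and an independent set. For example, with $S=\{a,b,c\}$ and $d$ deeper: the non-edge $ad$ forces $a$ into the independent part, the edge $cd$ forces $c$ into the clique part, and then $b$ has no possible role, since placing $b$ in the clique part contradicts the non-edge $bc$, while placing it in the independent part contradicts the edge $ab$. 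Similarly, $S=\{a,b\}$ forces both $a$ and $b$ into the independent part, contradicting the edge $ab$, and $S=\{a,d\}$ forces both into the clique part, contradicting the non-edge $ad$. You do say you will track clique/independent roles (and these are well defined simply because they are part of the decomposition data, so the appeal to Lemma~\ref{lem: unique split ptn} is not really needed), so this is a completable omission rather than a wrong approach; but as written, your ``cleanest route'' paragraph would not close these cases. A minor notational point: in the paper's convention for $(G_i,A_i,B_i)$, the set $A_i$ is the independent set and $B_i$ is the clique, the reverse of your usage.
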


\begin{lem}\label{lem: composition}
Let $q$ be the degree sequence of a graph and let $p=(p_2;p_1)$ be the splitted degree sequence of an indecomposable split graph. We have

\[\G(p\circ q) \cong \G(p) \Box \G(q).\]
\end{lem}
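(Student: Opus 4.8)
The plan is to exhibit an explicit graph isomorphism between the vertex sets and then check that it preserves adjacency in both directions. By Theorem~\ref{thm: Tysh deg seqs}, every realization of $p \circ q$ arises uniquely as a composition $(P,V_1,V_2) \circ Q$, where $P$ ranges over realizations of $p$ and $Q$ over realizations of $q$. Crucially, because $p$ is the degree sequence of an \emph{indecomposable} split graph, Lemma~\ref{lem: unique split ptn} guarantees that the splitting partition $(V_1,V_2)$ is forced by $P$ alone; so the data of a realization of $p \circ q$ reduces to an unordered pair of a realization $P$ of $p$ and a realization $Q$ of $q$. First I would use this to define a bijection $\Phi$ sending the vertex $(P,V_1,V_2)\circ Q$ of $\G(p\circ q)$ to the pair $(P,Q)$, i.e.\ to the vertex $(P,Q)$ of $\G(p)\Box\G(q)$; Theorem~\ref{thm: Tysh deg seqs} says every realization of $p\circ q$ is hit exactly once, so $\Phi$ is a bijection on vertex sets.

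Next I would verify that $\Phi$ respects edges. The key structural input is Lemma~\ref{lem: A4 within component}: any $2$-switch in a realization $F = (P,V_1,V_2)\circ Q$ of $p\circ q$ has all four of its vertices inside a single canonical component. Since the canonical decomposition of $p\circ q$ is obtained by concatenating that of $p$ with that of $q$, the components of $F$ are exactly the components coming from $P$ together with those coming from $Q$. Hence any $2$-switch in $F$ lies entirely within the copy of $P$ or entirely within the copy of $Q$, and correspondingly changes only the $P$-coordinate or only the $Q$-coordinate of $\Phi(F)$, leaving the other fixed. A $2$-switch confined to the $P$-part takes $(P,Q)$ to $(P',Q)$ where $P'$ is obtained from $P$ by a $2$-switch; this is precisely an edge of $\G(p)$ in the first factor, which is an edge of the Cartesian product. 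The analogous statement holds for $2$-switches confined to the $Q$-part. This shows $\Phi$ maps edges of $\G(p\circ q)$ to edges of $\G(p)\Box\G(q)$.

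For the reverse direction I would check that every edge of $\G(p)\Box\G(q)$ is realized. An edge of the product fixes one coordinate and applies a $2$-switch in the other; I must confirm that a $2$-switch available in $P$ (respectively $Q$) remains available as a genuine $2$-switch in the composite graph $F$, and that performing it yields the composite corresponding to the changed coordinate. This follows because the composition $\circ$ adds only edges between $V_2$ and $V(Q)$, so the induced subgraphs on $V(P)$ and on $V(Q)$ are exactly $P$ and $Q$; a quadruple of vertices witnessing a $2$-switch in $P$ therefore still witnesses the same $2$-switch in $F$ (the added cross edges touch none of the relevant non-edges within $V(P)$, and the incident adjacencies inside $V(P)$ are unchanged). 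One subtlety deserving care is that the splitting partition $(V_1,V_2)$ must be preserved by the $2$-switch in $P$ so that the result is again a composition of the required form; but since $P$ and $P'$ have the same degree sequence $p$ and $p$ is indecomposable, Lemma~\ref{lem: unique split ptn} forces both to have the same partition type, so the composite $(P',V_1,V_2)\circ Q$ is well defined and equals $\Phi^{-1}(P',Q)$.

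I expect the main obstacle to be the bookkeeping that ties Lemma~\ref{lem: A4 within component} cleanly to the product structure: one must argue that a $2$-switch in $F$ cannot mix $P$-vertices with $Q$-vertices, which rests on the added cross edges between $V_2$ and $V(Q)$ forming a complete bipartite pattern (so that no ``forbidden non-edge'' of a prospective switch can straddle the two parts). Establishing that the $2$-switches of $F$ are \emph{exactly} the disjoint union of the $2$-switches of $P$ and the $2$-switches of $Q$ — neither fewer (Lemma~\ref{lem: A4 within component} handles this) nor more (handled by the complete bipartite join preventing any cross-part switch) — is the heart of the argument; once it is in place, the correspondence of edges on both sides is immediate, and combined with the vertex bijection $\Phi$ this establishes the claimed isomorphism.
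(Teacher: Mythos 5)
Your proposal mirrors the paper's own proof: the same vertex correspondence between realizations of $p\circ q$ and pairs $(P,Q)$ justified by Theorem~\ref{thm: Tysh deg seqs}, the same use of Lemma~\ref{lem: unique split ptn} to pin down the splitting partition, and the same use of Lemma~\ref{lem: A4 within component} to confine $2$-switches to a single factor. However, there is one concrete gap: your blanket appeal to Lemma~\ref{lem: unique split ptn} fails when $P$ has exactly one vertex. That lemma applies only to split graphs with more than one vertex, and its conclusion is genuinely false for $K_1$: a single vertex may be placed in the clique or in the independent set, and the two splittings yield different compositions ($(0;)\circ q$ attaches a dominating vertex, while $(;0)\circ q$ attaches an isolated vertex), so the partition is \emph{not} forced by $P$ alone --- only by the splitted sequence $p$. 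This is exactly why the paper opens its proof by invoking the dichotomy of~\cite[Prop.~3.5]{BarrusWest12} --- an indecomposable splitted sequence has either a single term or at least two terms in each of $p_1$ and $p_2$ --- and then disposes of the single-vertex case separately, where $\G(p)\cong K_1$ and the added vertex's neighborhood is the same in every realization, giving $\G(p\circ q)\cong\G(q)\cong\G(p)\Box\G(q)$. Your argument needs this case split, or at least an explicit remark that in the one-vertex case the partition is supplied by $p$ itself and no $2$-switch can involve that vertex.

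A smaller inaccuracy: in your closing paragraph you claim that the impossibility of a $2$-switch mixing $P$-vertices with $Q$-vertices ``rests on'' the complete join, ``so that no forbidden non-edge of a prospective switch can straddle the two parts.'' That reasoning is wrong as stated, since every pair consisting of a vertex of $V_1$ and a vertex of $V(Q)$ is a straddling non-edge; ruling out mixed switches requires either Lemma~\ref{lem: A4 within component} (which you correctly invoke earlier, and which is what the paper uses) or a longer case analysis exploiting both the complete join on $V_2\times V(Q)$ and the empty join on $V_1\times V(Q)$. Relatedly, your parenthetical attributions in that paragraph are swapped: Lemma~\ref{lem: A4 within component} is what rules out \emph{extra} (mixed) switches in the composition, while the observation that the composition induces untouched copies of $P$ and $Q$ is what guarantees no switches are \emph{lost}. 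With the degenerate case added and these points straightened out, your proof coincides with the paper's.
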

\begin{proof}
Since $p$ is indecomposable, either $|p_1|+|p_2|=1$ or there are at least two terms in each of $p_1$ and $p_2$~\cite[Prop.~3.5]{BarrusWest12}.

In the first case $p$ is either $(0;)$ or $(;0)$, creating in $p\circ q$ a term equal to either $n-1$ or $0$, respectively, where $n$ is the number of terms in $p \circ q$. The neighborhood of a vertex with one of these degrees is fixed among all realizations of $p \circ q$; there is thus a bijection between realizations of $p \circ q$ and realizations of $q$, and two realizations in the former are adjacent in $\G(p \circ q)$ if and only if the corresponding realizations of $q$ are adjacent in $\G(q)$. Observe further that $\G(p) \cong K_1$ and $K_1 \Box H \cong H$ for any graph $H$, so $\G(p \circ q) \cong \G(q) \cong \G(p) \Box \G(q)$, as claimed.

Suppose instead that $p_1$ and $p_2$ each contain more than one term, and let $\pi$ be the unsplitted degree sequence with terms in $p$. Since $p$ is indecomposable, by Lemma~\ref{lem: unique split ptn} there is only one partition of the terms of $\pi$ into the degrees of clique vertices and independent set vertices in a realization of $\pi$; this partition is precisely that indicated by $(p_2;p_1)$. Thus for each split indecomposable graph $P$ with more than one vertex, there is a unique partition $A,B$ of the vertices of $P$ such that $(P,A,B)$ is a splitted graph.

As we observed in the last section, if $(P,A,B)$ is a realization of $p$ and $Q$ is a realization of $q$, then the composition $(P,A,B) \circ Q$ is a realization of $p \circ q$, and we define the map $\varphi: V(\G(p)\Box\G(q)) \to V(\G(p \circ q))$ by letting $\varphi((P,Q))$ be $(P,A,B) \circ Q$. In the remainder of the proof we show that $\varphi$ is an isomorphism.

Note that for any realization of $p \circ q$ we may, by Theorem~\ref{thm: Tysh deg seqs}, find a triple $(P,A,B)$ and a graph $Q$ such that the realization is $(P,A,B)\circ Q$ and $P$ is a realization of $p$ and $Q$ is a realization of $q$ (here, the graph $Q$ and the degree sequence $q$ are compositions of all but the left-most canonical components in the decompositions in~\eqref{eq: canon decomp graphs} and~\eqref{eq: canon decomp deg seqs}). Then $\varphi((P,Q))$ is the chosen realization of $p \circ q$, and we see that $\varphi$ is surjective.

Let $(P,Q),(P',Q') \in V(\G(p)\Box\G(q))$. It is clear from the definition of the composition operation $\circ$ that if $P\neq P'$ or if $Q \neq Q'$, then the edge sets of $(P,A,B)\circ Q$ and $(P',A',B')\circ Q'$ differ, where the sets $A,B$ and $A',B'$ are the unique pairs of sets partitioning their respective vertex sets into independent sets and cliques. Thus $\varphi((P,Q)) \neq  \varphi((P',Q'))$, and we see that $\varphi$ is injective.

Before showing that $\varphi$ preserves edges and non-edges, we define some notation and terms. For the remainder of the proof consider two vertices $(P_1,Q_1)$ and $(P_2,Q_2)$ in $V(\G(p)\Box\G(q))$, and let $A_1,B_1$ and $A_2,B_2$ be the unique pairs of sets partitioning the $V(P_1)$ and $V(P_2)$ into independent sets and cliques. Recall that by definition, the composition $(P,A,B)\circ Q$ is constructed by adding edges to disjoint copies of $P$ and $Q$, so each vertex in the composition corresponds to a vertex in either $P$ or $Q$ (where the names of vertices in $(P,A,B)\circ Q$ are ``shifted'' as necessary so that the result is a realization of $p \circ q$). By Lemma~\ref{lem: A4 within component} we can then assign a natural correspondence between 2-switches in the composition $(P,A,B)\circ Q$ and 2-switches in the graphs $P$ and $Q$.

If $\varphi((P_1,Q_1))$ and $\varphi((P_2,Q_2))$ are adjacent vertices in $\G(p \circ q)$, then there is a single 2-switch possible in $(P_1,A_1,B_1)\circ Q_1$ that changes the graph into $(P_2,A_2,B_2)\circ Q_2$. By Lemma~\ref{lem: A4 within component}, we conclude that either $P_1=P_2$ and the corresponding 2-switch in $Q_1$ creates $Q_2$; or $Q_1=Q_2$ and the corresponding 2-switch in $P_1$ creates $P_2$. In either case the vertices $(P_1,Q_1)$ and $(P_2,Q_2)$ are adjacent in $\G(p)\Box\G(q)$.

If instead we begin with the assumption that $(P_1,Q_1)$ and $(P_2,Q_2)$ are adjacent in $\G(p)\Box\G(q)$, then in the pairs $P_1,P_2$ and $Q_1,Q_2$, the graphs in one pair are equal and the graphs in the other pair differ only by a single 2-switch. The corresponding 2-switch in $(P_1,A_1,B_1)\circ Q_1$ results in the graph $(P_2,A_2,B_2)\circ Q_2$ (note that Lemma~\ref{lem: unique split ptn} guarantees that $A_1=A_2$ and $B_1=B_2$). Thus $\varphi((P_1,Q_1))$ and $\varphi((P_2,Q_2))$ are adjacent vertices in $\G(p \circ q)$.
\end{proof}

Since, as noted in~\cite{Tyshkevich00}, the composition operation $\circ$ is associative, we may use Lemma~\ref{lem: composition} and induction to obtain our theorem linking canonical decomposition and realization graphs.

\begin{thm}\label{thm: Cartesian product}
If $d$ is a degree sequence, with \[d = \alpha_1 \circ \cdots \circ \alpha_k \circ d_0\] as its canonical decomposition, then \[\G(d) = \G(\alpha_1) \Box \cdots \Box \G(\alpha_k) \Box \G(d_0).\]
\end{thm}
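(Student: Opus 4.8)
The plan is to induct on $k$, the number of indecomposable splitted components in the canonical decomposition, using Lemma~\ref{lem: composition} as the engine and the associativity of both the composition $\circ$ and the Cartesian product $\Box$.

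For the base case $k = 0$, the decomposition reads $d = d_0$, so $\G(d) = \G(d_0)$ and the claim holds trivially, the empty product of factors $\G(\alpha_1) \Box \cdots \Box \G(\alpha_k)$ contributing nothing. (If one prefers $k = 1$, then $d = \alpha_1 \circ d_0$ and Lemma~\ref{lem: composition} applies directly, $d_0$ being graphical as an indecomposable graphical sequence.)

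For the inductive step, I would suppose the result holds for all degree sequences whose canonical decomposition has fewer than $k$ splitted components. Given $d = \alpha_1 \circ \cdots \circ \alpha_k \circ d_0$, set $q = \alpha_2 \circ \cdots \circ \alpha_k \circ d_0$. Since $\circ$ is associative (as noted in~\cite{Tyshkevich00}), we may write $d = \alpha_1 \circ q$. First I would verify that $q$ is a genuine graphical degree sequence: it is a composition of graphical sequences, and Theorem~\ref{thm: Tysh deg seqs} guarantees such compositions have realizations. Moreover, by the uniqueness asserted in that theorem, $\alpha_2 \circ \cdots \circ \alpha_k \circ d_0$ is precisely the canonical decomposition of $q$, so $q$ has exactly $k-1$ splitted components and the inductive hypothesis applies to it. Because $\alpha_1$ is, by construction, the splitted degree sequence of an indecomposable split graph, Lemma~\ref{lem: composition} yields $\G(d) = \G(\alpha_1 \circ q) \cong \G(\alpha_1) \Box \G(q)$.

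Applying the inductive hypothesis to $q$ gives $\G(q) \cong \G(\alpha_2) \Box \cdots \Box \G(\alpha_k) \Box \G(d_0)$, and substituting this into the previous isomorphism, then invoking the associativity of $\Box$, completes the argument. I do not anticipate a serious obstacle: the entire content lies in Lemma~\ref{lem: composition}, and the theorem is essentially its iterated application. The only points requiring care are bookkeeping ones, namely confirming that peeling off the leftmost component leaves a graphical sequence whose canonical decomposition is exactly the remaining tail (so that the induction genuinely applies to $q$), and observing that the associativity of $\Box$ lets the nested products produced by repeated use of Lemma~\ref{lem: composition} flatten into the single iterated product claimed in the statement.
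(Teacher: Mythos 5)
Your proof is correct and is essentially the paper's own argument: the paper establishes the theorem in a single sentence by citing the associativity of $\circ$, Lemma~\ref{lem: composition}, and induction, which is precisely the induction on $k$ you carry out. Your additional bookkeeping (that the tail $q = \alpha_2 \circ \cdots \circ \alpha_k \circ d_0$ is graphical with this as its canonical decomposition, so the inductive hypothesis applies) is a faithful filling-in of the details the paper leaves implicit.
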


\section{Applications}
The main results of this section are characterizations of degree sequences $d$ for which $\G(d)$ is a triangle-free graph or a hypercube and a proof that each of these realization graphs is Hamiltonian.

A few definitions are necessary in order to present the results. Recall the definition of transposition graphs given in Example~\ref{exa: R(net)}, and let $K_{6,6}-6K_2$ denote the graph formed by removing the edges of a perfect matching from $K_{6,6}$; this graph is the unique (up to isomorphism) 5-regular bipartite graph on 12 vertices.

Matrogenic graphs were defined by F\"{o}ldes and Hammer in~\cite{FoldesHammer78} as graphs $G$ for which the vertex sets of alternating 4-cycles form the circuits of a matroid with ground set $V(G)$. These authors showed that a graph is matrogenic if and only if it contains no copy of the configuration shown in Figure~\ref{fig: matrogenic}, where dotted segments indicate non-adjacencies. 

\begin{figure}
\centering
\includegraphics[width=2cm]{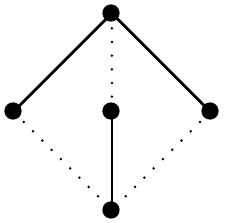}
\caption{The forbidden configuration for matrogenic graphs}
\label{fig: matrogenic}
\end{figure}

A pseudo-split graph is a graph $G$ whose vertices can be organized into disjoint sets $V_1,V_2,V_3$ such that $V_1$ is an independent set, $V_2$ is a clique, no vertex in $V_1$ has a neighbor in $V_3$, each vertex in $V_2$ is adjacent to every vertex in $V_3$, and either $V_3$ is empty or $G[V_3] \cong C_5$. Pseudo-split graphs were shown in~\cite{BlazsikEtAl93} (see also~\cite{MaffrayPreissmann94} and~\cite{MarkossianEtAl96}) to be precisely the graphs containing neither $2K_2$ nor $C_4$ as an induced subgraph.

It is not hard to verify that of the 5-vertex graphs containing the configuration in Figure~\ref{fig: matrogenic}, all induce either $2K_2$ or $C_4$ except for the two connected graphs in Figure~\ref{fig: chair and cochair}, which we call the chair and kite, respectively. Hence a graph is pseudo-split and matrogenic if and only if it is $\{2K_2,C_4,\chair,\kite\}$-free.

\begin{figure}
\centering
\includegraphics[width=6cm]{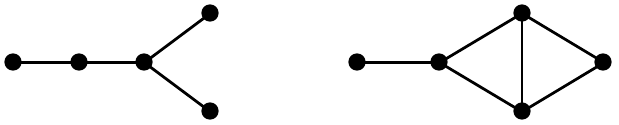}
\caption{The chair and kite graphs}
\label{fig: chair and cochair}
\end{figure}

We now recall some basic facts about realization graphs. Given a degree sequence $d=(d_1,\dots,d_n)$, let $\overline{d}=(n-1-d_n,\dots,n-1-d_1)$. If $d$ is the degree sequence of $G$, then $\overline{d}$ is the degree sequence of the complement $\overline{G}$.

\begin{lem}[\cite{ArikatiPeled99}] \label{lem: complements}
For any degree sequence $d$, $\G(\overline{d}) \cong \G(d)$, and if $d'$ is obtained by appending a 0 to $d$, then $\G(d') \cong \G(d)$.
\end{lem}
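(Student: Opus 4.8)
The plan is to prove the two claims in Lemma~\ref{lem: complements} separately, since they are logically independent. For the first claim, $\G(\overline{d}) \cong \G(d)$, I would exploit the fact that the complementation map on graphs is an involution that carries realizations of $d$ bijectively to realizations of $\overline{d}$. Specifically, for each realization $G$ of $d$ on vertex set $[n]$, its complement $\overline{G}$ is a realization of $\overline{d}$ (after the standard reversal of the degree list, which only reindexes vertices and does not affect the unlabeled realization graph). I would define a map $\Phi \colon V(\G(d)) \to V(\G(\overline{d}))$ by $\Phi(G) = \overline{G}$, check that it is a bijection (its inverse is complementation again), and then verify that it preserves adjacency in both directions.

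The crux of the first claim is therefore a single observation about how 2-switches interact with complementation. I would show that a $2$-switch $\{ab,cd\} \rightrightarrows \{ad,bc\}$ in $G$ corresponds to a $2$-switch in $\overline{G}$ on the same four vertices. Indeed, the four pairs among $\{a,b,c,d\}$ that are touched by the switch are $ab, cd$ (edges of $G$, hence non-edges of $\overline{G}$) and $ad, bc$ (non-edges of $G$, hence edges of $\overline{G}$); the remaining two pairs $ac, bd$ are unchanged. Thus in $\overline{G}$ the roles of edges and non-edges among these four vertices are simply swapped, and the operation that turns $\overline{G}$ into $\overline{G'}$ is precisely the $2$-switch $\{ad,bc\} \rightrightarrows \{ab,cd\}$. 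Hence $G$ and $G'$ are adjacent in $\G(d)$ if and only if $\overline{G}$ and $\overline{G'}$ are adjacent in $\G(\overline{d})$, so $\Phi$ is an isomorphism.

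For the second claim, where $d'$ is obtained by appending a $0$ to $d$, I would observe that a degree-$0$ vertex is an isolated vertex whose (empty) neighborhood is forced and identical across all realizations. More precisely, appending a $0$ corresponds in the canonical decomposition to composing with the indecomposable splitted sequence $(;0)$, so I can invoke Lemma~\ref{lem: composition} with $p = (;0)$ directly: since $\G((;0)) \cong K_1$ and $K_1 \Box H \cong H$, we immediately get $\G(d') \cong \G(d)$. Alternatively, and more self-containedly, I would set up the bijection sending each realization $G$ of $d$ on $[n]$ to the realization $G \cup \{n+1\}$ of $d'$ obtained by adding an isolated vertex, and note that no $2$-switch can ever involve the isolated vertex (it has no incident edges), so the adjacency structure is unchanged.

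The main obstacle here is purely bookkeeping rather than conceptual: the excerpt defines realization graphs on \emph{labeled} graphs over a fixed vertex set $[n]$, and the reversal $\overline{d}=(n-1-d_n,\dots,n-1-d_1)$ reindexes the vertices so that degrees remain in descending order. I would need to be careful that the labeling convention does not break the bijection, and argue that relabeling vertices induces an isomorphism of realization graphs (since $2$-switches are defined combinatorially and are preserved under any relabeling of $[n]$). Once this relabeling subtlety is dispatched, both parts follow cleanly from the two elementary observations above.
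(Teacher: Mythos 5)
Your proposal is correct, but note that the paper does not actually prove this statement: Lemma~\ref{lem: complements} is quoted with a citation to Arikati and Peled~\cite{ArikatiPeled99}, so your argument supplies a proof that the paper omits entirely. Both halves of your argument are sound. For complementation, the key point checks out: a 2-switch touches only the four pairs $ab$, $cd$, $ad$, $bc$, and complementation exchanges the roles of the edge pair and the non-edge pair while leaving $ac$ and $bd$ alone, so $\overline{G'}$ is obtained from $\overline{G}$ by the 2-switch $\{ad,bc\} \rightrightarrows \{ab,cd\}$; composing $G \mapsto \overline{G}$ with the label reversal $i \mapsto n+1-i$ (which you correctly flag as necessary to respect the descending-order labeling convention, and which preserves 2-switches) gives the isomorphism $\G(d) \cong \G(\overline{d})$. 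For the appended zero, the isolated vertex has no incident edge and hence can participate in no 2-switch, so adding it is an adjacency-preserving bijection between realization sets. Your alternative derivation of the second claim from Lemma~\ref{lem: composition} with $p = (;0)$ is also legitimate within the paper's logical structure: the composition lemma is proved in Section~3 without any appeal to Lemma~\ref{lem: complements}, so there is no circularity, and indeed the first case of the paper's proof of Lemma~\ref{lem: composition} (where $|p_1|+|p_2|=1$) is essentially your isolated-vertex argument, extended to cover the dominating-vertex sequence $(0;)$ as well. The only cosmetic caveat is attribution: since the lemma is credited to~\cite{ArikatiPeled99}, a self-contained proof like your complementation-plus-isolated-vertex route is preferable to invoking the paper's later machinery, but both are valid.
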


By Lemma~\ref{lem: complements} and the examples in Section~\ref{sec: preliminaries}, $\G((2,2,2,2)) \cong \G((3,3,3,2,1)) \cong K_3$, and the realization graph of the degree sequence of any net-complement is bipartite.

\begin{lem} \label{lem: induced}
If $p$ and $q$ are both degree sequences and some realization of $p$ is an induced subgraph of some realization of $q$, then $\G(p)$ is an induced subgraph of $\G(q)$.
\end{lem}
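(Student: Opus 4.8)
The plan is to embed a copy of $\G(p)$ inside $\G(q)$ as an induced subgraph by fixing the ambient realization of $q$ everywhere outside a prescribed set of vertices and letting the induced-subgraph realization range over all realizations of $p$. Concretely, suppose $R$ is a realization of $q$ on vertex set $[n]$, and suppose the vertices $S = \{s_1,\dots,s_m\} \subseteq [n]$ induce a realization $R_0$ of $p$ (where $m = |p|$). First I would define a map $\psi$ from the realizations of $p$ to the realizations of $q$ as follows: given a realization $P$ of $p$ on vertex set $[m]$, let $\psi(P)$ be the graph on $[n]$ whose edges among the vertices of $S$ are copied from $P$ (under the correspondence $i \mapsto s_i$) and whose remaining edges—those incident to $[n]\setminus S$—are exactly the edges of $R$. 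Because each vertex outside $S$ retains all of its incident edges from $R$, and each vertex $s_i$ in $S$ keeps its edges to $[n]\setminus S$ from $R$ while its degree within $S$ equals $\deg_{R_0}(s_i) = \deg_P(i)$, every vertex of $\psi(P)$ has the same degree as in $R$. Hence $\psi(P)$ is a realization of $q$, the graph induced on $S$ is a copy of $P$, and $\psi$ is injective since distinct $P$ give distinct edge sets on $S$.

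Next I would show that $\psi$ is an isomorphism onto the induced subgraph of $\G(q)$ that it spans. The key point is that a 2-switch whose four vertices all lie in $S$ can be performed in $\psi(P)$ if and only if the corresponding 2-switch is available in $P$, and it produces exactly $\psi(P')$ where $P'$ is the result in $P$; this is immediate because a 2-switch only consults adjacencies among its four vertices, and those adjacencies agree in $P$ and in $\psi(P)$. Consequently, whenever $P$ and $P'$ are adjacent in $\G(p)$, the images $\psi(P)$ and $\psi(P')$ are adjacent in $\G(q)$. For the ``induced'' part I must verify the converse among images: if $\psi(P)$ and $\psi(P')$ are adjacent in $\G(q)$ with $P \neq P'$, then the single 2-switch realizing the change must alter an adjacency, and since $\psi(P)$ and $\psi(P')$ agree on every edge not lying inside $S$, all four switch vertices must lie in $S$; the corresponding 2-switch then witnesses adjacency of $P$ and $P'$ in $\G(p)$.

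The one subtlety—and the step I expect to be the main obstacle—is confirming that \emph{every} 2-switch turning $\psi(P)$ into another image $\psi(P')$ necessarily has all four of its vertices inside $S$. A 2-switch changes exactly four potential adjacencies (the two deleted edges and the two added edges), so it suffices to argue that if even one of the switch vertices lay outside $S$, then $\psi(P)$ and $\psi(P')$ would differ on some edge incident to $[n]\setminus S$, contradicting the fact that both images inherit precisely the edges of $R$ there. I would make this rigorous by noting that the symmetric difference of the edge sets of $\psi(P)$ and $\psi(P')$ is exactly the symmetric difference of the edge sets of $P$ and $P'$ transported to $S$, hence is supported entirely within $S$; a 2-switch realizing this difference therefore involves only edges and non-edges inside $S$, forcing all four vertices into $S$. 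Since $\psi$ is a bijection onto its image that preserves and reflects adjacency, $\G(p)$ is isomorphic to the subgraph of $\G(q)$ induced by $\{\psi(P) : P \text{ a realization of } p\}$, completing the proof.
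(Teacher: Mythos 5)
Your proposal is correct and takes essentially the same approach as the paper: fix an ambient realization of $q$, replace the edges of the induced copy of the realization of $p$ with those of an arbitrary realization of $p$, and check that this correspondence both preserves and reflects adjacency in the realization graphs. The paper's own proof is a terse two-sentence sketch of exactly this construction; your write-up simply supplies the details it leaves implicit (the degree check, injectivity, and the symmetric-difference argument forcing any connecting 2-switch to lie inside $S$).
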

\begin{proof}
Let $P$ and $Q$ be realizations of $p$ and $q$, respectively, for which $P$ is an induced subgraph of $Q$. Note that by replacing the edges of the subgraph $P$ within $Q$ with the edges of any other realization of $p$ we produce another realization of $q$. Furthermore, the realizations of $q$ created in this way satisfy the same adjacency relations in $\G(q)$ that the corresponding realizations of $p$ do in $\G(p)$.
\end{proof}

We note in passing that the relation described in the hypothesis of Lemma~\ref{lem: induced} yields a partial order on the set of degree sequences, which was shown by Chudnovsky and Seymour~\cite{ChudnovskySeymour14} to be a well-quasi-ordering. It follows from Lemma~\ref{lem: induced}, then, that the induced subgraph relation yields a well-quasi-ordering on the class of realization graphs.

\begin{thm}\label{thm: main}
Let $d$ be the degree sequence of a simple graph. The following are equivalent:
\begin{enumerate}
\item[\textup{(a)}] $\G(d)$ is bipartite;
\item[\textup{(b)}] $\G(d)$ is triangle-free;
\item[\textup{(c)}] $\G(d)$ is the Cartesian product of transposition graphs and at most one copy of $K_{6,6} - 6K_2$;
\item[\textup{(d)}] $d$ is the degree sequence of a pseudo-split matrogenic graph.
\end{enumerate}
\end{thm}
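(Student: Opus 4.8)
The plan is to prove the cyclic chain (a) $\Rightarrow$ (b) $\Rightarrow$ (d) $\Rightarrow$ (c) $\Rightarrow$ (a). Two links are essentially immediate. Implication (a) $\Rightarrow$ (b) holds because a triangle is an odd cycle, so a bipartite graph is triangle-free. For (c) $\Rightarrow$ (a), I would use that every transposition graph is bipartite (Example~\ref{exa: R(net)}) and that $K_{6,6}-6K_2$ is bipartite, together with the standard fact that a Cartesian product of bipartite graphs is bipartite (two-color $(u,v)$ by the parity of the sum of the colors of $u$ and $v$); hence any graph of the form in (c) is bipartite.

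For (b) $\Rightarrow$ (d) I would work at the level of individual realizations using Lemma~\ref{lem: induced}. Pseudo-split matrogenic graphs are exactly the $\{2K_2,C_4,\chair,\kite\}$-free graphs, and these four forbidden graphs have degree sequences $(1,1,1,1)$, $(2,2,2,2)$, $(3,2,1,1,1)$, and $(3,3,3,2,1)$. By Example~\ref{exa: 1111} and the remark following Lemma~\ref{lem: complements}, the realization graph of each of these sequences is $K_3$. Consequently, if some realization $G$ of $d$ had an induced $2K_2$, $C_4$, $\chair$, or $\kite$, then Lemma~\ref{lem: induced} would embed $K_3$ as an induced subgraph of $\G(d)$, contradicting (b). So under (b) every realization of $d$ is $\{2K_2,C_4,\chair,\kite\}$-free, and in particular $d$ is the degree sequence of a pseudo-split matrogenic graph.

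The main content is (d) $\Rightarrow$ (c). Given a pseudo-split matrogenic realization $F$ of $d$ with canonical decomposition $F = (G_1,A_1,B_1)\circ\cdots\circ(G_k,A_k,B_k)\circ F_0$, Theorem~\ref{thm: Cartesian product} gives $\G(d) = \G(\alpha_1)\Box\cdots\Box\G(\alpha_k)\Box\G(d_0)$. Each $G_i$ and $F_0$ is an induced subgraph of $F$ and hence is again $\{2K_2,C_4,\chair,\kite\}$-free. I would then prove that the indecomposable pseudo-split matrogenic graphs are precisely $K_1$, $C_5$, the nets, and the net-complements. Granting this, Example~\ref{exa: R(net)} and Lemma~\ref{lem: complements} show that nets and net-complements have transposition graphs as realization graphs, Example~\ref{exa: 22222} gives $\G(C_5)\cong K_{6,6}-6K_2$, and single-vertex components contribute $K_1$ factors that the Cartesian product absorbs. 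Because $C_5$ is the only non-split graph on this list and only the terminal component $F_0$ may be non-split, at most one factor is $K_{6,6}-6K_2$ and every other nontrivial factor is a transposition graph, which is exactly (c).

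The crux---and the step I expect to be hardest---is the classification of the indecomposable pseudo-split matrogenic graphs. The non-split case is quick: if $G$ is pseudo-split with $G[V_3]\cong C_5$ and $V_1\cup V_2\neq\emptyset$, then $V_3$ is complete to $V_2$ and anti-complete to $V_1$, so $G = (G[V_1\cup V_2],V_1,V_2)\circ C_5$ is a nontrivial composition; thus an indecomposable non-split example can only be $C_5$. The split case is the laborious part. For an indecomposable split graph with clique $B$ and independent set $A$ (both of size at least two, since $G$ is indecomposable), I would study the traces $N(a)\cap B$. Fully nested neighborhoods are impossible, as they would yield a dominating or an isolated vertex and hence a decomposition; chair-freeness then forces any two clique vertices to be separated by at most one independent vertex on each side, and kite-freeness forces the neighborhoods to be uniformly singletons or uniformly co-singletons, identifying $G$ as a net or a net-complement. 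Executing this neighborhood analysis carefully (or, alternatively, citing the known description of the indecomposable components of matrogenic graphs) is where the real work lies.
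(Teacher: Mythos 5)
Your proposal follows the paper's skeleton exactly: the same cycle $\textup{(a)} \Rightarrow \textup{(b)} \Rightarrow \textup{(d)} \Rightarrow \textup{(c)} \Rightarrow \textup{(a)}$, and your arguments for $\textup{(a)} \Rightarrow \textup{(b)}$, $\textup{(b)} \Rightarrow \textup{(d)}$ (via Lemma~\ref{lem: induced}, Example~\ref{exa: 1111}, and Lemma~\ref{lem: complements} applied to the four forbidden graphs $2K_2$, $C_4$, chair, kite), and $\textup{(c)} \Rightarrow \textup{(a)}$ coincide with the paper's. The one divergence is in $\textup{(d)} \Rightarrow \textup{(c)}$. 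The paper does not classify anything itself: it cites the known structure theorem for matrogenic graphs (\cite{Tyshkevich84}; see also~\cite{MarchioroEtAl84} and~\cite{MahadevPeled95}), according to which every canonical component of a matrogenic graph is $K_1$, a net or net-complement, $C_5$, or $mK_2$ or its complement, and then uses pseudo-splitness (i.e., $\{2K_2,C_4\}$-freeness) to exclude the last type and to limit $C_5$ to at most one occurrence, after which Theorem~\ref{thm: Cartesian product} finishes. You instead propose to prove from scratch that the indecomposable pseudo-split matrogenic graphs are exactly $K_1$, $C_5$, the nets, and the net-complements. That classification statement is correct and does yield $\textup{(c)}$ by the same application of Theorem~\ref{thm: Cartesian product} (your observation that only the terminal component can be non-split, hence at most one $K_{6,6}-6K_2$ factor, is the right way to get the ``at most one'' clause). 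Your non-split reduction to $C_5$ is complete and correct. The caveat is that the split case --- the heart of your classification --- is only sketched: the claims that nested traces force an isolated or dominating vertex, and that chair- and kite-freeness force uniformly singleton or co-singleton traces, are in the right direction (indeed, chair-freeness controls the traces of clique vertices on the independent set, and kite-freeness dually controls the traces of independent vertices on the clique, each pair being nested or differing by one element on each side), but turning this into a proof requires the indecomposability hypothesis at several points and a nontrivial case analysis; it amounts to reproving the split case of the results the paper cites. So your route is self-contained where the paper's is citation-based, which is a genuine trade-off: yours would make the theorem independent of the matrogenic literature, at the cost of the laborious lemma you correctly identify as the crux and do not fully execute; the paper's is shorter but rests on~\cite{Tyshkevich84}. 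As written, your write-up would need either that lemma proved in full or the fallback you yourself mention --- citing the known description --- at which point it becomes the paper's proof.
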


\begin{proof}
We show that $\textup{(a)} \Rightarrow \textup{(b)} \Rightarrow \textup{(d)} \Rightarrow \textup{(c)} \Rightarrow \textup{(a)}$.

(a) \emph{implies} (b):  Since $\G(d)$ is bipartite, it contains no odd cycles and hence no triangles.

(b) \emph{implies} (d):  From Example~\ref{exa: 1111} and Lemma~\ref{lem: complements}, we see that the degree sequences of each of $2K_2$, $C_4$, the chair, and the kite have realization graphs containing a triangle. Thus by Lemma~\ref{lem: induced}, since $\G(d)$ is triangle-free, every realization of $d$ is $\{2K_2,C_4,\text{chair},\text{kite}\}$-free. Thus every realization of $d$ is pseudo-split and matrogenic.

(d) \emph{implies} (c): Assume that $d$ is the degree sequence of a pseudo-split matrogenic graph $G$. By results in~\cite{Tyshkevich84} (see also~\cite{MarchioroEtAl84} and Chapter 11 in~\cite{MahadevPeled95}), in the canonical decomposition of any matrogenic graph, the canonical components are each isomorphic to either (1) a single vertex, (2) a net or net-complement, (3) a chordless 5-cycle, or (4) the matching $mK_2$ or its complement, for some $m$. Since $G$ is pseudo-split and hence $\{2K_2,C_4\}$-free, none of the canonical components has form (4), and at most one component has form (3). By Theorem~\ref{thm: Cartesian product} and Examples~\ref{exa: 22222} and~\ref{exa: R(net)}, $\G(d)$ is then the Cartesian product of transposition graphs and at most one copy of $K_{6,6}-6K_2$.

(c) \emph{implies} (a): As observed in Example~\ref{exa: R(net)}, transposition graphs are bipartite. Since the Cartesian product of bipartite graphs is bipartite (as shown in~\cite{Sabidussi57}), if (c) holds then $\G(d)$ is bipartite.
\end{proof}

As a special case of Theorem~\ref{thm: main}, we may characterize those degree sequences $d$ for which $\G(d)$ is a hypercube. Recall that hypercubes are graphs in which each vertex may be identified with a (0,1) vector of fixed length such that two vertices are adjacent in the graph if and only if their vectors differ in exactly one entry. It is an easy exercise to show that the Cartesian product of any number of graphs, all of which are isomorphic to $K_1$ or $K_2$, yields a hypercube.

A graph is \emph{$P_4$-reducible} if each of its vertices belongs to at most induced path on four vertices. As shown in~\cite{GiakoumakisVanherpe97}, a graph $G$ is $P_4$-reducible if and only if $G$ is $\{C_5,P_5,\overline{P_5}, P, \overline{P}, \text{chair}, \text{kite}, \text{3-net}, \overline{\text{3-net}}\}$-free, where $P$ is the graph obtained by attaching a pendant vertex at one vertex of a chordless 4-cycle.

Split graphs, like $P_4$-reducible graphs, have a characterization in terms of forbidden induced subgraphs; F\"{o}ldes and Hammer showed~\cite{FoldesHammer77} that a graph is split if and only if it is $\{2K_2,C_4,C_5\}$-free.

\begin{thm}\label{thm: hypercubes}
Let $d$ be the degree sequence of a graph. The realization graph $\G(d)$ is a hypercube if and only if $d$ is the degree sequence of a split $P_4$-reducible graph.
\end{thm}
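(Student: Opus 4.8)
The plan is to prove the two implications separately. For the direction that starts from a split $P_4$-reducible graph I would lean on Theorem~\ref{thm: main} and the product decomposition of Theorem~\ref{thm: Cartesian product}; for the reverse direction I would use Lemma~\ref{lem: induced}. The one structural fact about hypercubes that I would exploit is local: a hypercube is triangle-free, and any two of its vertices have at most two common neighbors (two vertices at distance two have exactly two, and all other pairs have none). I expect this single property to rule out every induced subgraph I must exclude.

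First I would show that if $\G(d)$ is a hypercube then every realization of $d$ is split and $P_4$-reducible; in particular $d$ is the degree sequence of such a graph. Recall that split graphs are exactly the $\{2K_2,C_4,C_5\}$-free graphs and that $P_4$-reducible graphs are the $\{C_5,P_5,\overline{P_5},P,\overline{P},\chair,\kite,\text{3-net},\overline{\text{3-net}}\}$-free graphs. For each graph $H$ in the union of these lists I would verify that the realization graph of the degree sequence of $H$ is not an induced subgraph of any hypercube, so that the contrapositive of Lemma~\ref{lem: induced}, applied with $H$ viewed as a realization of its own degree sequence, prevents $H$ from being an induced subgraph of any realization of $d$. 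By Example~\ref{exa: 1111} and Lemma~\ref{lem: complements} the sequences of $2K_2$ and $C_4$ have realization graph $K_3$, and the realization graphs of the $\chair$ and $\kite$ sequences contain triangles (as recorded in the proof of Theorem~\ref{thm: main}); since hypercubes are triangle-free, these four graphs are excluded. The sequences of $C_5$, the 3-net, and the 3-net-complement have realization graphs $K_{6,6}-6K_2$ (Example~\ref{exa: 22222}) and $T_3\cong K_{3,3}$ (Example~\ref{exa: R(net)} and Lemma~\ref{lem: complements}), each of which has two vertices with at least three common neighbors, so the local property excludes these as well. Finally $P_5,\overline{P_5},P$, and $\overline{P}$ each contain an induced $2K_2$ or $C_4$ and so are excluded once those are.

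For the converse I would assume $d$ is the degree sequence of a split $P_4$-reducible graph $G$. Being split, $G$ is $\{2K_2,C_4\}$-free and hence pseudo-split, and being $P_4$-reducible it is $\chair$- and $\kite$-free; thus $G$ is pseudo-split matrogenic, and Theorem~\ref{thm: main} gives that $\G(d)$ is a Cartesian product of transposition graphs and at most one copy of $K_{6,6}-6K_2$, the factors being the realization graphs of the canonical components of $G$ by Theorem~\ref{thm: Cartesian product}. As in the proof of Theorem~\ref{thm: main}, those components are single vertices, nets, net-complements, or a single $C_5$, and each is an induced subgraph of $G$ because the composition $\circ$ adds edges only between distinct components. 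Since $G$ is split it is $C_5$-free, so there is no $C_5$ component and hence no $K_{6,6}-6K_2$ factor; since $G$ is $P_4$-reducible it is 3-net- and 3-net-complement-free, and because a $k$-net with $k\ge 3$ contains an induced 3-net (and similarly for complements), every net or net-complement component has $k\le 2$. By Example~\ref{exa: R(net)} and Lemma~\ref{lem: complements} each such component contributes a factor isomorphic to $T_1\cong K_1$ or $T_2\cong K_2$, so $\G(d)$ is a Cartesian product of copies of $K_1$ and $K_2$, that is, a hypercube.

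The step I expect to be the main obstacle is the passage, in the converse, from the $P_4$-reducibility of $G$ to the numerical bound $k\le 2$ on its net components. This hinges on two observations that must be made precise: that each canonical component occurs as an induced subgraph of $G$, and that every net with at least three spokes already contains an induced 3-net (with the analogous statements for net-complements and for the $C_5$ component). Once these are in place the argument is short, whereas the forbidden-subgraph bookkeeping of the forward direction is routine given the local hypercube property.
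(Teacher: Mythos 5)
Your proof is correct, and it splits into two halves of different character when compared with the paper. Your converse direction is essentially the paper's own argument: split $P_4$-reducible $\Rightarrow$ pseudo-split matrogenic $\Rightarrow$ (via the matrogenic structure theorem and Theorem~\ref{thm: Cartesian product}) canonical components are $K_1$'s and $2$-nets/$P_4$'s, giving a product of $K_1$'s and $K_2$'s; the two facts you flag as needing care (canonical components are induced subgraphs of $G$, and a $k$-net with $k \geq 3$ induces a $3$-net) are exactly the facts the paper uses implicitly at the same spot, and both are immediate from the definition of $\circ$, so there is no real obstacle there. Your forward direction, however, takes a genuinely different route. The paper first invokes Theorem~\ref{thm: main} (hypercube $\Rightarrow$ bipartite $\Rightarrow$ pseudo-split matrogenic), then argues about canonical components: a forbidden induced $C_5$, $3$-net, or $3$-net-complement would force a canonical component whose realization graph is $K_{6,6}-6K_2$ or $T_k$ ($k \geq 3$), and by Theorem~\ref{thm: Cartesian product} that factor would sit inside $\G(d)$ as an induced subgraph containing $K_{2,3}$, which no hypercube induces. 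You instead bypass the decomposition entirely in this direction and apply Lemma~\ref{lem: induced} uniformly to the whole forbidden-subgraph list for split $\cap$ $P_4$-reducible, killing each forbidden graph with the same two local hypercube properties (triangle-freeness and at most two common neighbors, i.e.\ no $K_{2,3}$); the extra graphs $P_5$, $\overline{P_5}$, $P$, $\overline{P}$ are absorbed because they contain induced $2K_2$ or $C_4$. Your version is more elementary and self-contained — it needs neither Theorem~\ref{thm: main} nor the canonical decomposition for that implication — at the cost of more bookkeeping over the forbidden list; the paper's version is shorter because it reuses the already-established structural machinery. Both hinge on the same key fact about hypercubes, so the difference is one of organization rather than of underlying idea, but it is a real difference in route.
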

\begin{proof}
Comparing the forbidden subgraph characterizations of split graphs and $P_4$-reducible graphs, we see that a graph $G$ is a split $P_4$-reducible graph if and only if it is $\{2K_2,C_4,C_5,\text{chair},\text{kite},\text{3-net},\overline{\text{3-net}}\}$-free, which is true if and only if $G$ is pseudo-split and matrogenic and contains no induced subgraph isomorphic to $C_5$, a 3-net, or the complement of a 3-net.

Suppose that $\G(d)$ is a hypercube. Since $\G(d)$ is bipartite, Theorem~\ref{thm: main} shows that $d$ is the degree sequence of a pseudo-split matrogenic graph. As stated in the proof of Theorem~\ref{thm: main}, this implies that if $G$ is any realization of $d$, then the components of $G$ in the canonical decomposition are each isomorphic to a $K_1$, a net or net-complement, or a chordless 5-cycle. It follows that if $G$ contained an induced subgraph isomorphic to $C_5$, a 3-net, or the complement of a 3-net, then some canonical component of $G$ would be isomorphic to $C_5$ or a $k$-net or the complement of a $k$-net for $k \geq 3$. However, this would cause a contradiction, since Theorem~\ref{thm: Cartesian product} would imply that $\G(d)$ would induce either $K_{6,6}-6K_2$ or some transposition graph $T_k$ for $k \geq 3$; these latter graphs are not induced subgraphs of any hypercube, since they each induce $K_{2,3}$, and in a hypercube each pair of nonadjacent vertices at distance 2 is joined by exactly two paths of length 2. Hence $G$ is pseudo-split and matrogenic and contains no induced $C_5$, 3-net, or the complement of a 3-net, and we conclude that $d$ is the degree sequence of a split $P_4$-reducible graph.

Conversely, suppose that $d$ is the degree sequence of a split $P_4$-reducible graph $G$. Then $G$ is pseudo-split and matrogenic, and by the structural characterization of matrogenic graphs mentioned in the proof of Theorem~\ref{thm: main}, each canonical component of $G$ is a single vertex or a net or net-complement. However, since $G$ is $P_4$-reducible, $G$ also contains no induced 3-net or the complement of a 3-net, so the canonical components of $G$ are all isomorphic to either $K_1$ or $P_4$.  Since $\G((0)) \cong K_1$ and $\G((2,2,1,1)) \cong K_2$, we conclude by Theorem~\ref{thm: Cartesian product} that $\G(d)$ is a Cartesian product of copies of $K_1$ and $K_2$ and hence is a hypercube.
\end{proof}

We now turn to the question of whether realization graphs are Hamiltonian. For the graphs $\G(d)$ in Theorem~\ref{thm: main}, the answer is yes.

\begin{cor}
Every triangle-free realization graph is Hamiltonian.
\end{cor}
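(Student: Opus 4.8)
The plan is to combine the structural description in Theorem~\ref{thm: main} with the good behavior of Hamiltonicity under Cartesian products. Since $\G(d)$ is triangle-free, part (c) of Theorem~\ref{thm: main} gives $\G(d) \cong F_1 \Box \cdots \Box F_s$, where each factor $F_i$ is a transposition graph or a copy of $K_{6,6}-6K_2$, the latter occurring at most once. First I would discard the factors isomorphic to $T_1 \cong K_1$, which leave the product unchanged up to isomorphism, and then record the Hamiltonicity of the surviving factors: by Chase~\cite{Chase73} each $T_k$ with $k \ge 3$ has a Hamiltonian cycle, and $K_{6,6}-6K_2$ (the graph $\G((2,2,2,2,2))$ of Example~\ref{exa: 22222}) is directly verified to be Hamiltonian as well. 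The only surviving factor that fails to be Hamiltonian is $T_2 \cong K_2$, which does, however, have a Hamiltonian path.

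Next I would invoke two standard facts about Cartesian products: a product of graphs each admitting a Hamiltonian path again admits one (obtained by snaking through consecutive copies), and if $G$ has a Hamiltonian path while $H$ has a Hamiltonian cycle, then $G \Box H$ has a Hamiltonian cycle. The argument then splits into two cases. If some surviving factor---a $T_k$ with $k \ge 3$ or the copy of $K_{6,6}-6K_2$---has a Hamiltonian cycle, I take it to be $H$ and let $G$ be the product of the remaining factors; since each remaining factor has a Hamiltonian path, so does $G$, and the second fact yields a Hamiltonian cycle in $\G(d)$. Otherwise every surviving factor is a $K_2$, so $\G(d)$ is a hypercube $Q_s$, and $Q_s$ is Hamiltonian whenever $s \ge 2$ (for instance by induction from $Q_2 \cong C_4$, using the same two facts).

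I expect the main point requiring care to be the bookkeeping of the degenerate factors rather than any substantial difficulty. In particular, one cannot simply assert that a product of Hamiltonian graphs is Hamiltonian, since $K_2$ is not Hamiltonian and a product of two graphs with Hamiltonian paths need not be Hamiltonian (witness the odd grid $P_3 \Box P_3$); this is exactly why the hybrid path/cycle fact, together with the observation that $K_2$ is the only non-Hamiltonian factor, is the correct tool. Tracing through the cases shows that $\G(d)$ has a Hamiltonian cycle in every instance except $\G(d) \cong K_1$ and $\G(d) \cong K_2$, which are the small degenerate realization graphs admitted by the convention, recalled in the introduction, under which the Hamiltonicity question is stated with the clause ``or is $K_2$''.
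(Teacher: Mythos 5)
Your proposal is correct, and it rests on the same two pillars as the paper's own proof: Theorem~\ref{thm: main}(c) together with the good behavior of Hamiltonicity under Cartesian products. The difference is in the treatment of degenerate factors, and there your version is genuinely tighter. The paper's proof is a one-liner: the Cartesian product of Hamiltonian graphs is Hamiltonian, and $K_{6,6}-6K_2$ and every transposition graph (citing~\cite{Chase73}) are Hamiltonian. Read literally, this glosses over the factors $T_1 \cong K_1$ and $T_2 \cong K_2$, which have no Hamiltonian cycle; indeed the corollary itself fails literally for $\G((2,2,1,1)) \cong K_2$ and $\G((0)) \cong K_1$, and holds only under the convention recalled in the introduction (``has a Hamiltonian cycle or is $K_2$''). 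Your proof confronts exactly this point: you discard the $K_1$ factors, observe that $K_2$ is the only possible non-Hamiltonian surviving factor, and then invoke the two hybrid facts---a product of graphs with Hamiltonian paths has a Hamiltonian path, and the product of such a graph with a Hamiltonian graph has a Hamiltonian cycle---finishing the all-$K_2$ case by the Hamiltonicity of hypercubes $Q_s$ for $s \ge 2$. Both hybrid facts are standard and correct (via the spanning subgraphs $P_m \Box P_n$ and $P_m \Box C_n$), your counterexample $P_3 \Box P_3$ correctly explains why the path/cycle hybrid is needed rather than a path/path statement, and your explicit isolation of $\G(d) \cong K_1$ and $\G(d) \cong K_2$ as the only exceptions is exactly the bookkeeping the paper's argument omits. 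In short: the paper's version buys brevity at the cost of silently absorbing the degenerate cases into its convention, while yours is complete as written; a reader wanting a fully rigorous statement should prefer your formulation, perhaps phrased as ``every triangle-free realization graph on at least three vertices is Hamiltonian.''
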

\begin{proof}
It is a well-known exercise that the Cartesian product of Hamiltonian graphs is Hamiltonian. Since both $K_{6,6}-6K_2$ and every transposition graph (see~\cite{Chase73}) are Hamiltonian, Theorem~\ref{thm: main} implies that $\G(d)$ is Hamiltonian if it is triangle-free.
\end{proof}

In conclusion, we remark that much is still to be learned about realization graphs in general, both in terms of their structure as graphs and in terms of the degree sequences that produce certain desired properties in a realization graph. We leave it as a question to determine if there are interesting theorems similar to Theorem~\ref{thm: main} for other well-known classes of graphs (planar graphs, perfect graphs, etc.).

\section*{Acknowledgments}

The author wishes to thank the anonymous referees for helpful comments on the presentation of the paper.

\end{document}